\theoremstyle{theorem}
\newtheorem{theorem}{Theorem}[section]
\newtheorem{proposition}[theorem]{Proposition}
\newtheorem{lemma}[theorem]{Lemma}
\theoremstyle{definition}
\newtheorem{definition}{Definition}[section]
\theoremstyle{remark}
\newtheorem{remark}{Remark}[section]
\theoremstyle{example}
\newtheorem{example}{Example}[section]
\theoremstyle{notation}
\newtheorem{notation}{Notation}[section]
\newcommand{\al}{\alpha}
\newcommand{\be}{\beta}
\newcommand{\la}{\lambda}
\newcommand{\Ann}{{\rm Ann}}
\newcommand{\gm}{\mathfrak{m}}
\title{The resultants of quadratic binomial complete intersections}
\author{T.\  Harima, Niigata University \\ Department of Mathematics Education, Niigata, 950-2181 Japan
\thanks{Supported by JSPS KAKENHI Grant 15K04812.} 
\and A.\ Wachi, Hokkaido University of Education \\  Department of Mathematics,  
\\ Kushiro, 085-8580 Japan 
\and  J.\ Watanabe, Tokai University \\ Department of Mathematics, Hiratsuka, 259-1292 Japan
}
\begin{document}

\maketitle
\date{}

\def\pa{{\partial}}
\begin{abstract} 
We compute the resultants for quadratic binomial complete intersections. As an 
application we show that any quadratic binomial complete intersection can have the set
of square-free monomials as a vector space basis if the generators are put
in a normal form.
\end{abstract}

\section{Introduction}
Consider the homogeneous polynomial in 5 variables of degree 5
\[F=12vwxyz+ t_1+t_2, \]
where 
\[t_1=-2(p_1v^3yz + p_2vw^3z + p_3vwx^3 + p_4wxy^3 + p_5xyz^3),\]
\[t_2=p_1p_3v^3x^2 + p_2p_4w^3y^2 + p_3p_5x^3z^2 + p_1p_4v^2y^3 + p_2p_5w^2z^3.\]

The polynomial $F$, as the Macaulay dual generator of an Artinian Gorenstein algebra in the Macaulay's inverse system,
defines a flat family of  Gorenstein algebras with
Hilbert function $(1 \ 5 \ 10 \ 10 \ 5 \ 1)$. If
$p_1p_2p_3p_4p_5 + 1 \neq 0$, then the ideal which $F$ defines  is
$5$ generated, and if  $p_1p_2p_3p_4p_5 + 1 =  0$, it is 7 generated. All the members in this
family possess the strong Lefscehtz property. This is known from the computation of
the 1st and the 2nd Hessians of the form $F$.  
On the other hand there exists a similar, but slightly more complicated  form  $G$ (see \S6),
involving $5$ parameters $p_1, \ldots, p_5$, which  
has generic  Hilbert function $(1 \ 5 \ 10 \ 10 \ 5 \ 1)$, but if $p_1p_2p_3p_4 p_5+1=0$ then
the Hilbert function  reduces to $(1 \ 5 \ 5 \ 5 \ 5 \ 1)$.  It seems remarkable that the second
Hessian of $G$ is divisible by $(p_1p_2p_3p_4 p_5+1)^5$.  A striking fact is that it is precisely equal to
the resultant of the complete intersection as the generic member of this family. 

In this paper we do not pursue the Hessians; instead, we study the resultant for
quadratic complete intersections.
Generally speaking the resultant of $n$ forms in $n$ variables is a very
complicated polynomial in the coefficients of the forms.  The meaning of the resultant is
that it does not vanish if and only if the ideal generated by the $n$ forms
is a complete intersection.
Thus if the $n$ forms are monomials, the resultant is easy to  describe.

Through the investigation of the strong Lefschetz property of complete intersections,
we found it necessary to acquire the skill to
calculate  the  resultant of complete intersections.  The theory of the resultants
has a long history, but a  method to obtain it is described completely
in the book \cite{GKZ}. Generic computation is impossible except in 
a few cases because of the huge number of variables involved and the high degree of the resultant.
Hence, if one wishes to deal with the resultant in an arbitrary number of variables, it is
reasonable to restrict the  attention to a special class of homogeneous polynomials.  

The purpose of this paper is to describe the resultant of quadratic binomials 
 in $n$ variables.  We restricted our attention only to quadratic forms. 
 It is because, though it seems needless to say, the degree-two condition  makes the computation simple. 
 However,  in addition to it, we can 
 expect that a lot of information can be obtained from the knowledge of 
 quadratic complete intersections. Indeed the authors~\cite{hww_1} 
 showed  that a certain family of
 complete intersections is obtained as subrings of quadratic complete intersections.
 In addition,  McDaniel~\cite{cris_mcdaniel}    showed that many complete intersections could be embedded in quadratic
 complete intersections. 
 We can expect, from the manner their  results were proved, a vast
 amount of complete intersections to be  obtained as subrings of quadratic complete intersections
 sharing the socle and general linear elements.

Suppose that $I=(f_1, \ldots, f_n)$ is an ideal generated by  $n$ homogeneous 
polynomials of the same degree $\la$ in the polynomial ring $R$.  
Then $I$ is a complete intersection if and only if $I_{nd-n+1}=R_{n\la -n- d +1}I_d $ is the whole homogeneous 
space $R_{n\la - n  + 1}$.  With the aid of  an idea of Gelfand et.\ al~\cite{GKZ}, it is possible  to pick up 
certain polynomials from among the elements in  $I_{\la n-n+1}$ such that their span is the whole space.
We apply their method to the particular situation 
where the generators are quadratic binomials.  
It turns out that the same method can be used
to obtain the conditions which guarantees that, for smaller values of $\lambda$,
the elements in  $I_{\lambda}$ generate the subspace of
$R_{\lambda}$ complementary to the space spanned by the  square-free monomials. 
Thus our computation unexpectedly 
gives us a proof that the complete intersections defined by quadratic binomials can have
square free monomials as a vector space basis. We state it as Theorem~\ref{main_theorem}. 
This follows from Theorem~\ref{from_gelfand's_book}, which is the main result of this paper.   

This paper is organized as follows. In \S2 we show that any $n$-dimensional quadratic 
space can have a normal form. This is an elementary observation but it seems that
it has never been used systematically before.  
It reduces the amount of the
computation of the resultant considerably even in the generic case. 
In \S3 we investigate some properties of the determinants of square matrices of the  ``binomial type'',  
which we need  
in the sequel. In \S4, we introduce matrices consisting of rows as coefficients of  the polynomials in the ideal $I_{\la}$.
Then we  apply the result of \S2 to finding the determinants of these matrices.   
In \S5 we give some examples of quinternary quintics and 
in \S6 we briefly discuss the  relation between the 2nd Hessian of the Macaulay dual and the
resultant of the quadratic binomials. We refer the reader to \cite[Definition~3.75]{HMMNWW} or \cite{maeno_watanabe}
for the definition of the 2nd Hessian. 

\section{The normal form of a quadratic vector space} 

\begin{definition} 
  A binomial is a polynomial of the form  $f=\al M + \be N$, where $M,N$ are distinct monomials in certain variables
  and $\al, \be$ are some elements in a field.   
\end{definition}

\begin{notation} \normalfont
  We denote by  $R=K[x_1, x_2, \ldots, x_n]$  the polynomial ring in $n$ variables over a field $K$. 
  We denote by  $R_{\lambda}$  the homogeneous space of degree $\lambda$ of $R$.  
\end{notation}

\begin{proposition} \normalfont  \label{quadratic_space_in_normal_form} 
Let $V \subset R_2$ be an $n$-dimensional vector  subspace of $R_2$. 
Then by a linear transformation of the variables we may choose a basis
$f_1, f_2, \ldots, f_n$ for $V$ such that
\[f_i=x_i^2 + \fbox{\mbox{\rm linear combination of square-free monomials}}, i=1,2, \ldots, n.\]
\end{proposition}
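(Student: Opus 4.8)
The plan is to reduce the statement to a genericity assertion about the projection onto the ``square part'' of $R_2$, and then to produce the required change of variables by choosing $n$ sufficiently general evaluation points. Write $R_2 = S \oplus T$, where $S = \langle x_1^2, \dots, x_n^2 \rangle$ is the span of the squares and $T = \langle x_ix_j : i<j\rangle$ is the span of the square-free monomials, and let $\pi \colon R_2 \to S$ be the projection with kernel $T$. The desired conclusion is equivalent to the existence of an invertible linear substitution $\varphi$ of the variables such that the restriction $\pi|_{\varphi(V)} \colon \varphi(V) \to S$ is an isomorphism: indeed, once $\pi|_{\varphi(V)}$ is bijective, the unique preimages $f_1, \dots, f_n \in \varphi(V)$ of $x_1^2, \dots, x_n^2$ form a basis of $\varphi(V)$, and each $f_i = x_i^2 + (\text{element of } T)$ is exactly the claimed normal form for $V$ in the new variables. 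Since $\dim V = \dim S = n$, being an isomorphism is the same as being injective, that is, $\varphi(V) \cap T = 0$.

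Next I would compute how the square part transforms under a substitution. Let $\varphi$ be given by $x_k \mapsto \sum_m g_{mk}x_m$ for an invertible matrix $g=(g_{mk})$, and let $w_i = (g_{i1}, \dots, g_{in})$ denote its $i$-th row, viewed as a point of $K^n$. A direct expansion shows that the coefficient of $x_i^2$ in $\varphi(f)$ equals the value $f(w_i)$ of the polynomial $f$ at the point $w_i$; the term $a_k x_k^2$ contributes $a_k g_{ik}^2$ and the term $b_{kl}x_kx_l$ contributes $b_{kl}g_{ik}g_{il}$, and the sum of these is precisely $f(w_i)$. Consequently the composite $V \xrightarrow{\varphi} \varphi(V) \xrightarrow{\pi} S \cong K^n$ is exactly the evaluation map $f \mapsto (f(w_1), \dots, f(w_n))$. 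Thus it suffices to find an invertible $g$, equivalently $n$ linearly independent points $w_1, \dots, w_n$, for which this evaluation map is an isomorphism.

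The heart of the matter is the genericity argument, which I expect to be the main obstacle. Assume $K$ is infinite, as holds in the cases of interest. A nonzero element of $R_2$ is a nonzero polynomial function on $K^n$, so the only $f \in V$ with $f(w)=0$ for all $w \in K^n$ is $f=0$; hence the evaluation functionals $\{\,\mathrm{ev}_w : w \in K^n\,\}$ have trivial common kernel in $V$ and therefore span $V^*$. In particular, fixing a basis of $V$, the $n \times n$ matrix with entries $f_j(w_i)$ has determinant a polynomial in the $w_i$ that does not vanish identically, so the tuples for which the evaluation map is an isomorphism form a nonempty Zariski-open subset of $(K^n)^n$, as do the tuples with $w_1, \dots, w_n$ linearly independent. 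Since $(K^n)^n$ is irreducible and $K$ is infinite, these two nonempty open sets meet; any common point yields an invertible $g$ making the evaluation map an isomorphism, and the associated $\varphi$ then makes $\pi|_{\varphi(V)}$ an isomorphism, furnishing the normal-form basis by the reduction above. The only delicate points are the bookkeeping in the expansion of the second step and the hypothesis that $K$ be infinite; over a finite field one would instead pass to a sufficiently large extension before running the genericity argument.
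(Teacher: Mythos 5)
Your argument is correct over an infinite field and takes a genuinely different route from the paper's. The paper proceeds by induction on $n$: it reduces modulo $x_n$, uses the inductive hypothesis to bring the matrix $M$ of coefficients of the squares into near-identity form, and, when $\det M=0$, restores invertibility by a substitution $x_i\mapsto x_i+\xi_i x_n$, the point being that the last generator then involves only square-free monomials. You instead split $R_2=S\oplus T$ and identify the composite $V\xrightarrow{\ \varphi\ }\varphi(V)\xrightarrow{\ \pi\ }S$ with the evaluation map at the rows $w_1,\dots,w_n$ of the substitution matrix. Your key identity, that the coefficient of $x_i^2$ in $\varphi(f)$ equals $f(w_i)$, is checked correctly and is valid in every characteristic, since the cross terms produced by squaring only land on square-free monomials and never touch the coefficient of $x_i^2$. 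This reformulation is more conceptual than the paper's induction: it exhibits normal forms as an instance of generic nonsingularity of an evaluation map and produces the basis in one stroke rather than recursively. For infinite $K$ (in particular for the generic coefficient field $\pi(a_1,\dots,a_n,b_1,\dots,b_n)$ used in the first half of Theorem~\ref{main_theorem}) your proof is complete.

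The genuine gap is the finite-field case, and the fix you propose does not close it. The Proposition is stated over an arbitrary field $K$, and the paper needs normal forms in that generality: the second half of the proof of Theorem~\ref{main_theorem} treats ``an arbitrary field $K$'' with generators assumed to be in normal form. Passing to an infinite extension $L\supset K$ produces a matrix $g\in{\rm GL}_n(L)$ and a normal-form basis of $\varphi_g(V\otimes_K L)$; that is a statement about $V\otimes_K L$, not about $V$, and nothing guarantees that $g$, or any transformation doing the same job, can be chosen with entries in $K$. The precise step that breaks is the last one: your two determinantal conditions cut out a nonempty Zariski-open subset of affine $n^2$-space, but over a finite field a nonempty open set need not contain a $K$-rational point, so the ``two opens meet'' argument has no force. (The other use of infiniteness, that evaluations span $V^*$, can actually be salvaged over any $K$: the functions $x_i^2$ and $x_ix_j$ are linearly independent on $K^n$ even for $K$ finite, by multilinearity considerations.) Contrast this with the paper's induction, whose only scalar choice is a vector $(\xi_1,\dots,\xi_{n-1})$ with $f_n(\xi_1,\dots,\xi_{n-1},1)\neq 0$ for a nonzero $f_n$ supported on square-free monomials; since such an $f_n$ is multilinear, a suitable $\xi$ exists already in $\{0,1\}^{n-1}$ over any field. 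As written, then, your proof establishes the Proposition only for infinite $K$; covering finite fields would require either a genuine descent argument or a constructive replacement for the genericity step.
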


\begin{proof}
  We induct on $n$. If $n=1$, then the assertion is trivial. 
  Assume that $n > 1$.
  Suppose that $V$ is spanned by
  \[f_1, f_2, \ldots, f_n.\]
  Let $M=(m_{ij})$ be the matrix defined by
\[m_{ij} = \fbox{ \mbox{the coefficient of $x_j ^2$ in  $f_i$} }.\] 
  By a linear transformation of variables if necessary, we may assume that 
  $f_1, \ldots, f_{n-1}$ are linearly independent by reduction $x_n=0$.  
  Then by the induction hypothesis we may assume that $M$ takes the form:
  
\[M=
\left(
\begin{array}{ccccc}
  1&0&0&0&*\\
  0&1&0&0&*\\
   & &\ddots& & \\
  0&0&\cdots &1&*\\
  *&*&\cdots &*&*
\end{array}  
\right). 
\]

First assume  $\mbox{det}\;M \neq 0$. Define
\[\begin{pmatrix} g_1\\g_2\\ \vdots \\ g_n\end{pmatrix} = M^{-1}\begin{pmatrix} f_1\\f_2\\ \vdots \\ f_n 
\end{pmatrix}.\]
Then $g_1, \ldots, g_n$ are a  desired basis for $V$.  Next assume that
$\mbox{det}\;M = 0$. This means that we may make the last row of $M$ a zero vector after subtracting a linear
combination of $f_1, \ldots, f_{n-1}$ from $f_n$. 
Consider the linear transformation of the variables
\begin{align*}
  {}& x_i \mapsto x_i+ \xi _i x_n, \mbox{ for } i=1, 2, \ldots, n-1, \\
  {}& x_n \mapsto x_n.
\end{align*}
With this transformation only the last column of $M$ is affected and non-zero
element appears as the $(n,n)$ entry. (This is because $f_n$ contains some square-free monomial with non-zero coefficient.)
Thus we are in the situation as $\mbox{det }M\neq 0$.  
\end{proof}

\begin{definition}
Suppose that $f_1, \ldots, f_n \in R_2$ are linearly independent. We will say that
these elements are in a {\bf normal form} as a basis for a quadratic vector space if
\[f_i=x_i^2 + \fbox{\mbox{\rm linear combination of square-free monomials}}, i=1,2, \ldots, n.\]
\end{definition}

\begin{example}　\normalfont
  Consider  $R=K[x,y,z]$, $V=\langle x^2, y^2, xy \rangle$.  
  Then we have the matrix expression:
  \[\begin{pmatrix}  x^2\\y^2\\xy  \end{pmatrix}=
  \begin{pmatrix}  1&0&0&0&0&0 \\ 0&1&0&0&0&0 \\  0&0&0&1&0&0 \end{pmatrix}
  \begin{pmatrix}  x^2\\y^2\\z^2\\xy\\xz\\yz \end{pmatrix}
  \]
  \newcommand{\GL}{{\rm GL}}
  Make the translation $\sigma \in \GL(3)$ : $x\mapsto x + \xi z$,  $y \mapsto y + \eta z$ and $z \mapsto z$. 
  Then we have the isomorphism of vector spaces $V \cong \langle f,g,h   \rangle$, where 
  \[\begin{pmatrix}  f\\g\\h \end{pmatrix}=
  \begin{pmatrix}  1&0&\xi ^2&0&2\xi&0 \\ 0&1&\eta^2&0&0&2 \eta \\  0&0&\xi \eta &1&\eta & \xi \end{pmatrix}
  \begin{pmatrix}  x^2\\y^2\\z^2\\xy\\xz\\yz \end{pmatrix}.
  \]
  Furthermore we have the equality of vector spaces:
  $\langle  f,g ,h  \rangle = \langle  f',g' ,h'  \rangle$, 
  where
\[\begin{pmatrix}  f'\\g'\\h' \end{pmatrix}=
\begin{pmatrix}  1&0&\frac{-\xi}{\eta} \\ 0&1&\frac{-\eta}{\xi} \\ 0&0&\frac{1}{\xi\eta}\end{pmatrix}
  \begin{pmatrix}  f\\g\\h \end{pmatrix}.
  \]
  So the vector space  $V$ is transformed to  $\langle  f',g',h' \rangle$, where 
  \[\begin{pmatrix}  f'\\g'\\h' \end{pmatrix}=
  \begin{pmatrix}
    1&0&0&\frac{-\xi}{\eta}&\xi&\frac{-\xi ^2}{\eta} \\
    0&1&0&\frac{-\eta}{\xi}&\frac{-\eta ^2}{\xi}& \eta  \\
    0&0&1&\frac{1}{\xi\eta } &\frac{1}{\xi}&\frac{1}{\eta}  \end{pmatrix}
  \begin{pmatrix}  x^2\\y^2\\z^2\\xy\\xz\\yz \end{pmatrix}.
  \]
  It follows that  we have the isomorphism of the algebras $K[x,y,z]/(V)\cong K[x,y,z]/(f'g',h')$ with 
  the elements $f', g', h'$ are put in a normal form.
  
  \end{example}

For a later purpose we prove some propositions in linear algebra.

\begin{proposition}  \label{binomial_matrix} \normalfont 
  Let 
  $A:=\{a_1, a_2, \ldots, a_N\}$ and $B:=\{b_1, b_2, \ldots, b_N\}$ be two independent sets of variables.
Suppose that $P$ is an $N \times N$ square matrix satisfying the following conditions:
  \begin{enumerate}
    \item
      The $i$th row of $P$ contains  $a_i$ and $b_i$ as entries and  all the other entries are zero.
    \item
      Any column of $P$ contains an element in $A$ and an element in $B$.  
  \end{enumerate}
  Then the following conditions are equivalent.
  \begin{enumerate}
  \item[(1)] $\det P$ is an irreducible polynomial as an element  in the polynomial ring
    \[R:=K[a_1, \ldots, a_N, b_1, \ldots, b_N].\]
  \item[(2)] The matrix $P$ is irreducible, i.e., does not split to blocks
    like $\begin{pmatrix}P_1&O\\O&P_2 \end{pmatrix}$ by permutation of rows and columns.
     \item[(3)] $\det P=\pm(a_1a_2\cdots a_N +(-1)^{N+1} b_1b_2\cdots b_N)$.  
    \end{enumerate}
\end{proposition}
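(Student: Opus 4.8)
The plan is to reduce all three conditions to a single combinatorial property of a permutation attached to $P$, and then to read off the equivalences from an explicit factorization of $\det P$.

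First I would normalize the matrix. By condition (i) each row $i$ has its $A$-entry $a_i$ in some column $\sigma(i)$ and its $B$-entry $b_i$ in some column $\tau(i)$, and condition (ii), together with a counting argument, forces both $\sigma$ and $\tau$ to be permutations of $\{1,\dots,N\}$; moreover $\sigma(i)\neq\tau(i)$, since the two entries of row $i$ occupy distinct positions. Permuting the columns by $\sigma^{-1}$ puts $a_i$ in the diagonal position $(i,i)$ and $b_i$ in position $(i,\pi(i))$, where $\pi:=\sigma^{-1}\tau$ is a permutation with no fixed point. This column permutation only multiplies $\det P$ by a sign, and it preserves both the block structure relevant to (2) and the shape of the right-hand side in (3), so it is harmless; from now on I work with the normalized $P$ and study $\pi$.

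Next I would compute $\det P$. In the Leibniz expansion a permutation $\rho$ contributes a nonzero term only if $\rho(i)\in\{i,\pi(i)\}$ for every $i$. Writing $T$ for the set of rows that select the off-diagonal entry $b_i$, the selected columns are $\{i:i\notin T\}\cup\{\pi(i):i\in T\}$, and these form a permutation precisely when $\pi(T)=T$, i.e. when $T$ is a union of cycles of $\pi$. Since a cycle of length $\ell$ contributes the sign $(-1)^{\ell-1}$, summing over all such $T$ factors the determinant across the cycle decomposition of $\pi$ into cycles $C_1,\dots,C_k$:
\[
\det P=\pm\prod_{j=1}^{k}\Bigl(\prod_{i\in C_j}a_i+(-1)^{|C_j|-1}\prod_{i\in C_j}b_i\Bigr).
\]
I expect this factorization to be the crux of the whole argument.

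With the factorization in hand the three equivalences follow by relating each to the statement that $\pi$ is a single $N$-cycle, i.e. $k=1$. For (2), the bipartite incidence graph of rows and columns (an edge $i\!-\!i$ from $a_i$ and an edge $i\!-\!\pi(i)$ from $b_i$) has connected components exactly the cycles of $\pi$, and each component carries equally many rows and columns, so $P$ is block-indecomposable iff $k=1$. For (3), the displayed product equals $\pm(a_1\cdots a_N+(-1)^{N+1}b_1\cdots b_N)$ iff there is a single factor, i.e. $k=1$ (and then $(-1)^{N-1}=(-1)^{N+1}$). For (1), if $k\geq 2$ the product exhibits $\det P$ as a product of two nonunits, hence reducible; if $k=1$ I must still prove that the single binomial $a_1\cdots a_N+(-1)^{N+1}b_1\cdots b_N$ is irreducible. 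This last point is the only genuinely algebraic step, and I expect it to be the main obstacle: viewing $f:=a_1\cdots a_N+(-1)^{N+1}b_1\cdots b_N$ as a polynomial of degree one in $a_1$, any factorization $f=gh$ puts all of $a_1$ into one factor, forcing the other to divide both monomials $a_2\cdots a_N$ and $(-1)^{N+1}b_1\cdots b_N$ simultaneously; as these share no variable their greatest common divisor is a unit, so one factor is constant and $f$ is irreducible. Assembling these observations, (1), (2) and (3) are each equivalent to $k=1$, and hence to one another.
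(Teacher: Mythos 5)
Your proof is correct, but it is organized differently from the paper's, and the key algebraic step is handled by genuinely different means. The paper proves the implications in a cycle $(1)\Rightarrow(2)\Rightarrow(3)\Rightarrow(1)$: for $(2)\Rightarrow(3)$ it uses irreducibility to normalize $P$ (after permutations the $a_i$ sit on the diagonal, the $b_i$ on the superdiagonal and in the $(N,1)$ corner) and then evaluates the determinant, which is exactly the single-cycle case of your Leibniz-expansion computation. You instead prove the factorization of $\det P$ over \emph{all} cycles of the fixed-point-free permutation $\pi=\sigma^{-1}\tau$, with no irreducibility hypothesis, and then make each of (1), (2), (3) equivalent to the single statement that $\pi$ is an $N$-cycle. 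This hub-and-spokes architecture buys you two things: one computational core instead of three separate implications, and the fact that your master factorization is essentially Proposition~\ref{binomial_matrix_2}(3) of the paper (proved there by a separate digraph/circuit argument), so your route in effect unifies the two propositions. The second real divergence is the irreducibility of $d=a_1\cdots a_N\pm b_1\cdots b_N$: the paper passes to the quotient
\[
K[a_1,\ldots,a_N,b_1,\ldots,b_N]/(d,\,b_1-1,\ldots,b_N-1)\cong K[a_1,\ldots,a_N]/(a_1\cdots a_N\pm 1),
\]
notes it is a domain of dimension $N-1$, and invokes regular sequences to conclude that $R/(d)$ is a domain — a commutative-algebra argument whose final step is left rather terse. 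Your argument (degree $1$ in $a_1$, so in any factorization one factor must divide both monomials $a_2\cdots a_N$ and $b_1\cdots b_N$, which share no variable, hence is a unit) is elementary, self-contained, and complete; it is the more economical proof of this step, while the paper's has the flavor of the ring-theoretic tools used elsewhere in the article.
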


\begin{proof}
  Assume that $P$ splits into blocks. Then obviously $\det P$ is reducible. This proves (1) implies (2).
  Assume that $P$ is irreducible.  Permuting the columns do not affect the condition of $P$.  So we
  may assume that the diagonal entries of $P$ are $(a_1, a_2, \ldots, a_N)$.
  Furthermore we may conjugate $P$ by a permutation matrix so that 
  $a_1, \ldots, a_N$ are diagonal entries and elements of $B$ are distributed in the super-diagonal entries
  and at the $(N,1)$-position. 
  This enables us to compute the determinant of $P$  as (3). Thus we have
  proved that (2) implies (3).  It remains to show that (3) implies (1).  In other words we have to show that
  $d:=a_1 \cdots a_N \pm b_1 \cdots b_N$ is an irreducible polynomial in $R$. 
  It is easy to see that we have the isomorphism 
  \[K[a_1, \ldots,a_N, b_1, \ldots, b_N]/(d, b_1-1, b_2-1, \ldots, b_N -1)\cong K[a_1, \ldots, a_N]/(a_1\ldots a_N \pm 1),\]
  and  that this algebra is an integral domain of Krull dimension $N-1$.  This shows that
  \[d, b_1-1, \ldots, b_N-1 \] is a regular sequence in $R$ and furthermore $R/(d)$ is an integral domain. 
\end{proof}

In the next proposition we slightly weaken the conditions on $P$ and prove similar properties.  

\begin{proposition}  \label{binomial_matrix_2} \normalfont 
  Let  $A:=\{a_1, a_2, \ldots, a_N\}$ and $B:=\{b_1, b_2, \ldots, b_N\}$ be independent sets of variables.
  Suppose that $P$ is an $N \times N$ square matrix which satisfies the following conditions. 
  \begin{enumerate}
    \item
      The $i$th row of $M$ contains $a_i$ and $b_i$ as entries and  all the other entries are zero.
    \item
      Any column of $M$ does not contain two elements in   $A$. 
  \end{enumerate}
  Then we have: 
  \begin{enumerate}
     \item[(1)] The variable $a_i$ divides $\det P$ if $a_i$ is the only non-zero element in the column that contains $a_i$.   
     \item[(2)] $\det P$ is independent of $b_i$ if  $a_i$ is the only non-zero element in the
       column that contains $a_i$. 
     \item[(3)] $\det P$ factors into a product of a monomial in $a_1, \ldots, a_N$ and binomials of the 
       form $a_{i_1}a_{i_2}\cdots a_{i_r} \pm b_{i_1}b_{i_2}\cdots b_{i_r}$. 
      \item[(4)]If a binomial $a_{i_1}a_{i_2}\cdots a_{i_r} \pm b_{i_1}b_{i_2}\cdots b_{i_r}$ is a factor of
       $\det P$, then we can arrange the order of the variables so that $a_{i_j}$ and $b_{i_j}$
       are in the same row and  $a_{i_{j}}$ and $b_{i_{j-1}}$ are in the same column. (We let $b_{i_0}=b_{i_r}$.)
  \end{enumerate}
\end{proposition}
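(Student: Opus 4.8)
The plan is to exploit the very rigid sparsity pattern of $P$: each of the $N$ rows has exactly two nonzero entries, $a_i$ and $b_i$, while condition (2) forces the $N$ entries $a_1,\ldots,a_N$ to occupy $N$ distinct columns, hence exactly one per column. Since permuting columns changes $\det P$ only by an overall sign (which the $\pm$ in the conclusion absorbs), I would first normalize $P$ so that $a_i$ sits in the $(i,i)$ position for every $i$. Then each $b_i$ lies off the diagonal, in some column $\tau(i)\neq i$, and the assignment $i\mapsto\tau(i)$ defines a fixed-point-free function $\tau:\{1,\ldots,N\}\to\{1,\ldots,N\}$. Everything that follows is read off from the functional graph of $\tau$.

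For parts (1) and (2) I would argue directly from the Leibniz expansion $\det P=\sum_{\pi}\operatorname{sgn}(\pi)\prod_i P_{i,\pi(i)}$. The hypothesis shared by (1) and (2), namely that $a_i$ is the only nonzero entry in its column, says precisely that no $b_j$ lands in column $i$, i.e.\ that $i$ is not in the image of $\tau$. Hence in any permutation $\pi$ producing a nonzero term, column $i$ can only be covered by $\pi(i)=i$; so every surviving term carries the factor $a_i$ and never uses the entry $b_i$. This yields both $a_i\mid\det P$ and the independence of $\det P$ from $b_i$ simultaneously.

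For the structural statements (3) and (4) I would analyze which permutations contribute. A term is nonzero exactly when $\pi(i)\in\{i,\tau(i)\}$ for every $i$; writing $S=\{i:\pi(i)=\tau(i)\}$, bijectivity of $\pi$ forces $\tau|_S$ to be a permutation of $S$, so $S$ must be a union of cycles of $\tau$. Vertices lying on no cycle (in particular all $i\notin\operatorname{im}\tau$) are forced to be fixed points of every contributing $\pi$, producing a common monomial factor $\prod_{i\in T}a_i$, where $T$ is the transient set. Grouping the remaining choices cycle by cycle, each cycle $C$ of length $\ell$ contributes either $\prod_{i\in C}a_i$ (taking $C\not\subseteq S$) or the sign of an $\ell$-cycle times $\prod_{i\in C}b_i$ (taking $C\subseteq S$); since an $\ell$-cycle has sign $(-1)^{\ell-1}$, summing the two options and multiplying across the independent cycles gives
\[
\det P=\pm\Big(\prod_{i\in T}a_i\Big)\prod_{C}\Big(\prod_{i\in C}a_i+(-1)^{\ell_C-1}\prod_{i\in C}b_i\Big),
\]
which is exactly the factorization asserted in (3). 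For (4), a binomial factor arises from a single cycle $C:\ i_1\to i_2\to\cdots\to i_r\to i_1$ of $\tau$; since $b_{i_{j-1}}$ occupies position $(i_{j-1},\tau(i_{j-1}))=(i_{j-1},i_j)$ while $a_{i_j}$ occupies $(i_j,i_j)$, the variables $a_{i_j}$ and $b_{i_{j-1}}$ share column $i_j$ (with $b_{i_0}=b_{i_r}$), and $a_{i_j},b_{i_j}$ obviously share row $i_j$, which is the claimed arrangement.

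The main obstacle I anticipate is not a single calculation but the bookkeeping in part (3): correctly identifying that the contributing permutations are indexed by unions of cycles of $\tau$, and then tracking the sign $(-1)^{\ell-1}$ of each cycle so that the sum over subsets of cycles genuinely collapses into the advertised product of binomials. The transient/fixed-point analysis yielding the monomial factor and the cycle-sign computation are the two places requiring care; once the contributing-permutation description is established, both the factorization (3) and the positional statement (4) follow by reading entries off the normalized matrix.
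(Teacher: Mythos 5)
Your proof is correct, and all four parts go through as you wrote them. The underlying combinatorial object you use is in fact the same as the paper's: after your normalization ($a_i$ at position $(i,i)$, $b_i$ at $(i,\tau(i))$), the functional graph of $\tau$ is precisely the digraph the paper attaches to $P$ (vertices $A\sqcup B$, arcs $a_i\to b_i$ for rows and $b_j\to a_i$ for columns), your cycles are its ``circuits,'' and your transient set is its ``maximal chains.'' Where you genuinely differ is the engine that turns this decomposition into a statement about $\det P$. The paper argues inductively: it expands along a column whose only nonzero entry is some $a_i$ (respectively clears $b_i$ by a column operation) to get (1) and (2), peels off chain-heads one at a time to reduce to the case where every column has two nonzero entries, and then asserts that each remaining circuit contributes a binomial factor, without computing signs. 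You instead classify the contributing permutations in the Leibniz expansion, show they are exactly those fixing the complement of a union of cycles of $\tau$, and obtain the closed formula $\det P=\pm\bigl(\prod_{i\in T}a_i\bigr)\prod_{C}\bigl(\prod_{i\in C}a_i+(-1)^{\ell_C-1}\prod_{i\in C}b_i\bigr)$ in one pass, with (1) and (2) falling out of the same analysis. Your version buys explicitness (the sign of each binomial and the exact multiplicity structure, which the paper never writes down); the paper's digraph formulation buys reusability, since that language is applied verbatim in Section 4 to the matrices $C(\lambda)$, where the same variable occurs in many rows and a single normalization function $\tau$ is no longer available. One small caveat, which applies equally to the paper's ``(4) follows immediately from (3)'': to conclude that \emph{every} binomial factor of $\det P$ comes from a cycle, you should invoke that the cycle binomials are irreducible and pairwise non-associate (they involve disjoint variable sets; irreducibility is the content of the paper's Proposition~\ref{binomial_matrix}, implication (3)$\Rightarrow$(1)), so that unique factorization identifies any binomial factor with a cycle binomial.
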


For proof we use a digraph associated to the matrix $P$ as defined in the next Definition.

\begin{definition} \label{digraph}  
  In the same notation of Proposition~\ref{binomial_matrix_2}, we put $X=A \sqcup B$ and call it the set of vertices.
  We define the set $E$ of arcs to be the union of the two sets  
  \[\{a_i \to b_j | \mbox{$a_i$ and $b_j$ are in the same row} \}   \]
  and
  \[\{b_j \to a_i | \mbox{ $b_j$ and $a_i$ are in the same column} \}.\]
  We may call $(X,E)$ the {\bf digraph associated to $P$}.   
  (Since we have assumed that $i$th row contains $a_i$ and $b_i$, there is an arc $a_i \to b_j$ if and only if $i=j$.)  
\end{definition}

  Suppose $ \{a_{j_1}, a_{j_2} , \ldots, a_{j_r} \} \subset A$  and  
  $\{  b_{j_1}, b_{j_2} , \ldots , b_{j_r} \} \subset B$  are  subsets both consisting of $r$ elements.
  
  We call them a {\bf circuit } of $M$  if  $a_{j_i}$  and  $b_{j_i}$   are contained in one row
  and   $b_{j_i}$   and     $a_{j_{i+1}}$
  are  in one column for all   $i=1,2,\ldots, r$.  (We let $a_{j_{r+1}}=a_{j_{1}}$.)   
  A circuit will be represented as 
  \[a_{j_1} \to b_{j_1} \to a_{j_2} \to b_{j_2} \to  \cdots \to  a_{j_r} \to  b_{j_r} \to a_{j_1}.\]
  If we drop the last term from a circuit, we call it a {\bf chain} of $M$.
  A {\bf chain is maximal} if it cannot be embedded into a circuit or a longer chain. 

\begin{proof}[Proof of Proposition~\ref{binomial_matrix_2}]
  \begin{enumerate}
  \item[(1)] Suppose that  $a_i$  is the only non-zero entry of a column.
    Then obviously $a_i$ divides $\det P$.
  \item[(2)] Recall that $a_i$ and $b_i$ are in the $i$th row of $P$.
    If $a_i$ is the single non-zero element in a column, then
    we may make a column operation to clear  $b_i$.  Hence $\det P$ does not involve $b_i$.
  \item[(3)] Let $(X,E)$ be the digraph introduced in Definition~\ref{digraph}.
    It is easy to see that $(X,E)$ decomposes into the disjoint union
    of circuits and maximal chains.  The first element in a maximal chain is an element of $A$ as a single
    non-zero entry of the column. Indeed any element in $B$ can be  prepended by an element in $A$ and
    in addition, an element in $A$ cannot be prepended by an element of $B$
    if and only if it is a single non-zero entry  of the column.
    If an element $a_i$ appears in a row as a single non-zero element of the column,
    we have $\det P=a_i\det P'$, where $\det P'$ is the matrix  $P$ with the row
    and column deleted that contains  $a_i$.  
    Thus it is enough to treat  $P$ in which all columns have two non-zero elements as well as rows.  
    In this case  $(X,E)$ decomposes as a disjoint union of circuits.  A circuit in $(X,E)$ like   
    \[a^{(1)}\to b^{(1)} \to a^{(2)}\to b^{(2)} \to \cdots  \to a^{(k-1)}\to b^{(k-1)} \to a^{(k)}\to b^{(k)} \to a^{(1)}.\]
    gives us a binomial as a divisor of $\det P$. 
    This completes the proof for (3).
  \item[(4)] This follows immediately from (3). 
  \end{enumerate}
\end{proof}

\begin{example} \normalfont                    
\[\det  \begin{pmatrix}a_1&0&0&b_1 \\ 0&a_2&0&b_2 \\ 0&0&a_3&b_3 \\ 0&0&b_4&a_4
    \end{pmatrix} = a_1a_2(a_3a_4 - b_3b_4).\]
  \end{example}


\section{The quadratic binomial complete intersections} 
\label{new_definition}  \label{def_of_Ms}

In this section we denote by $E=K\langle x_1, x_2, \ldots, x_n  \rangle$ the graded vector space spanned by 
the square-free monomials in the variables $x_1, x_2, \ldots, x_n$  over a field $K$. 
As well as $R_{\la}$, we denote by $E_{\la}=K\langle x_1, x_2, \ldots, x_n   \rangle _{\la}$ 
the homogeneous space spanned by the square-free monomials of degree $\la$ over $K$. 

\begin{definition}\normalfont   \label{notation_of_set_of_monomials}
  For all positive integers $\lambda=1,2,3, \ldots , $ we define the sets of monomials  
  $M_1(\lambda)$, $M_2(\lambda)$, $\ldots$,  $M_n(\lambda)$ of degree $\la$  as follows:
  
\begin{align*}
{} &M_1(\la)  =\{ x_1^{\la _1}x_2^{\la _2}x_3^{\la _3}\cdots x_n ^{\la _n}| \sum _{j=1}^n \la _j = \la  \}, \\
& M_2(\la)  =\{ x_1^{\la _1}x_2^{\la _2}x_3^{\la _3}\cdots x_n ^{\la _n}| \sum _{j=1}^n \la _j= \la, \la _1 < 2  \},  \\ 
&M_3(\la)  =\{ x_1^{\la _1}x_2^{\la _2}x_3^{\la _3}\cdots x_n ^{\la _n}|\sum _{j=1}^n \la _j= \la, \la _1 <2 , \la _2 < 2  \}, \\ 
   & \ \ \ \ \ \ \ \ \ \ \ \  \vspace{10ex}    \vdots \\
&M_n(\la)  =\{ x_1^{\la _1}x_2^{\la _2}x_3^{\la _3} \cdots x_n^{\la _n}| \sum _{j=1}^n \la _j= \la, \la _1 <2 ,  \la _2 < 2, \cdots, \la _{n-1} < 2  \}.
\end{align*}
\end{definition}


\begin{proposition}  \label{basic_property}  \normalfont   
\begin{enumerate}
\item[(1)]
  The span of $M_1(\lambda)$ is  $K[x_1,x_2,\ldots, x_n]_{\la}$. 
\item[(2)]
  $M_1(\la) \supset M_2(\la) \supset \cdots  \supset M_{n-1}(\la) \supset M_{n}(\la)$. 
\item[(3)] For all $\lambda \geq 1$, the set $\bigsqcup _{j=1}^n M_j(\la)\otimes e_j$ is in one-to-one correspondence
  with $M_1(\la +2) \setminus K\langle x_1,x_2,\ldots, x_n \rangle _{\la +2}$.
  ($\{e_1, \ldots, e_n\}$ is a set of indeterminantes used to separate the monomials.)
\item[(4)] For all $\la \geq n-1$, 
  the set $\bigsqcup _{j=1}^n M_j(\la)\otimes e_j$ is in one-to-one correspondence with the set of all
  the monomials  in $R_{\la +2}$. 
\end{enumerate}
\end{proposition}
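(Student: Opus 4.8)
The plan is to dispose of the four parts in increasing order of difficulty, since (4) will reduce to (3) and (3) carries all the content. Parts (1) and (2) are immediate from Definition~\ref{notation_of_set_of_monomials}: by construction $M_1(\lambda)$ is the set of \emph{all} monomials of degree $\lambda$, which is a $K$-basis of $R_\lambda$, giving (1); and since each $M_k(\lambda)$ is cut out from $M_{k-1}(\lambda)$ by imposing one further constraint $\lambda_{k-1}<2$, the nested inclusions $M_1(\lambda)\supset\cdots\supset M_n(\lambda)$ of (2) hold by inspection.

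For (3) I would exhibit an explicit bijection together with its inverse. The natural forward map sends a tagged monomial $(m,e_j)$ with $m\in M_j(\lambda)$ to $m\,x_j^2\in R_{\lambda+2}$. First I would check that this lands in the target set: multiplying by $x_j^2$ raises the exponent of $x_j$ to at least two, so $m\,x_j^2$ is a non-square-free monomial of degree $\lambda+2$, hence an element of $M_1(\lambda+2)\setminus E_{\lambda+2}$. For the inverse, given a non-square-free monomial $N=x_1^{\mu_1}\cdots x_n^{\mu_n}$ of degree $\lambda+2$, let $j$ be the \emph{smallest} index with $\mu_j\geq 2$ (such an index exists precisely because $N$ is not square-free) and send $N$ to $(N/x_j^2,\,e_j)$.

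The crux is verifying that these two maps are mutually inverse, and this is exactly where the cutoff conditions $\lambda_1<2,\ldots,\lambda_{j-1}<2$ defining $M_j(\lambda)$ do the right work. If $m\in M_j(\lambda)$, then in $m\,x_j^2$ the variables $x_1,\ldots,x_{j-1}$ still carry exponent less than two, so $j$ is recovered as the least index whose exponent is at least two, and dividing back by $x_j^2$ returns $m$. Conversely, the recipe $N\mapsto(N/x_j^2,e_j)$ lands in the correct block $M_j(\lambda)$ because the minimality of $j$ forces $\mu_1,\ldots,\mu_{j-1}<2$, and these exponents are untouched when $x_j^2$ is removed. I expect this minimal-index bookkeeping to be the only delicate point; once it is pinned down the bijection of (3) is immediate.

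Finally (4) follows by combining (3) with a trivial dimension count. By (3) the tagged set is in bijection with $M_1(\lambda+2)\setminus E_{\lambda+2}$, so it suffices to show that there are no square-free monomials of degree $\lambda+2$ once $\lambda\geq n-1$. But a square-free monomial of degree $d$ requires $d$ distinct variables, so $E_d=0$ as soon as $d>n$; taking $d=\lambda+2\geq n+1$ gives $E_{\lambda+2}=\emptyset$, whence $M_1(\lambda+2)\setminus E_{\lambda+2}=M_1(\lambda+2)$ is the set of all monomials of degree $\lambda+2$, as required.
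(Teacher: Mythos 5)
Your proof is correct and follows essentially the same route as the paper: the same bijection $m\otimes e_j \mapsto m\,x_j^2$ with inverse given by dividing out $x_j^2$ at the smallest index $j$ whose exponent is at least $2$, and the same observation that no square-free monomials exist in degree $\lambda+2 \geq n+1$ to deduce (4) from (1) and (3). Your write-up simply makes explicit the minimal-index bookkeeping that the paper leaves to the reader.
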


\begin{proof}  
  \begin{enumerate}
  \item[(1)] and (2) are obvious. 
  \item[(3)]  
    Consider the correspondence $\bigsqcup _{j=1}^n M_j(\la)\otimes e_j \to M_{1}(\la+2)$  defined by
    $M_j(\la)\otimes e_j \ni m \otimes e_j \mapsto m x_j^2$.
    We can make the inverse map as follows.  
    Let $m:=x_1 ^{\la _1}\cdots x_n^{\la _n} \in K[x_1, \ldots, x_n]_{\la +2} \setminus  
    K\langle x_1, \ldots, x_n   \rangle _{\la +2} $. 
    Then some exponent $\la _j$ is greater than $1$. Let $j$ be the smallest index such that $\la _j \geq 2$.
    Then we let $m \mapsto (m/x_j^2) \otimes e_j \in M_j(\la) \otimes e_j$.   
  \item[(4)]
    Since  $\lambda \geq n-1$, we have $\lambda +2 \geq n+1$. For such degrees $\lambda$, the homogeneous part
     of $K\langle x_1, \ldots, x_n \rangle$ does not exist. So (1) and (3) imply (4). 
\end{enumerate}
\end{proof}

\begin{remark} \label{important_remark}
  \begin{enumerate}
  \item[(1)]
    We put $M_1(0)= \cdots = M_n(0)=\{1\}$.
  \item[(2)]
    If $\lambda =1$, then $M_1(\lambda)= \cdots = M_n(\lambda)=\{x_1, x_2, \ldots, x_n\}$.
  \item[(3)]  \label{important_remark_item_2}
    For all $\lambda \geq 1$, $M_{j}(\lambda)x_n \subset M_j(\lambda +1)$ for all $j=1,2, \ldots, n$.
    (This will play a crucial role in  the proof of Proposition~\ref{crucial_prop}.) 
\end{enumerate}
\end{remark}


\begin{theorem} \label{main_theorem} \normalfont    
  Let $R=K[x_1, x_2, \ldots, x_n]$ be the polynomial ring over a field $K$ and
  $A=R/I$  a quadratic binomial complete intersection where  the generators of $I$  are put in a
  normal form. Then the set of square-free monomials are a basis of $A$. 
\end{theorem}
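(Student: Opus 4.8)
The plan is to show that the square-free monomials $E = K\langle x_1, \ldots, x_n\rangle$ map onto a basis of $A = R/I$, which, since $I$ is a complete intersection with Hilbert series $\prod_{i=1}^n (1+t)$ (each generator having degree $2$), means $A$ has total dimension $2^n$ and socle degree $n$, matching exactly the number of square-free monomials. So it suffices to prove that the square-free monomials \emph{span} $A$, i.e. that every monomial of $R$ is congruent modulo $I$ to a linear combination of square-free monomials.

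First I would fix the normal form: by hypothesis each generator is $f_i = x_i^2 + h_i$ where $h_i \in E_2$ is a linear combination of square-free quadratic monomials. The key relation is therefore $x_i^2 \equiv -h_i \pmod{I}$, so that modulo $I$ any occurrence of a square $x_i^2$ can be replaced by something square-free in degree $2$. The natural strategy is a rewriting/degree argument: working upward in degree $\lambda$, I want to show by induction that the image of $E_\lambda$ spans $R_\lambda/I_\lambda$. The base cases $\lambda = 0, 1$ are immediate since $R_0 = E_0$ and $R_1 = E_1$. For the inductive step, take a monomial $m \in R_\lambda$ that is not square-free; then some exponent is $\geq 2$, and using Proposition~\ref{basic_property}(3) I can write $m = m' x_j^2$ with $j$ the smallest index for which $\lambda_j \geq 2$, where $m' x_j^2$ runs precisely over $M_1(\lambda) \setminus E_\lambda$. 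Replacing $x_j^2$ by $-h_j$ gives $m \equiv -m' h_j \pmod{I}$, and $m' h_j$ is a sum of monomials in lower ``square-free defect'' that I must argue eventually reduce to square-free terms.

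The delicate point is that this rewriting need not obviously terminate, because substituting $x_j^2 \mapsto -h_j$ can reintroduce other squares coming from $m'$ (the cofactor $m'$ may still contain squares, and multiplying $m'$ by a square-free quadratic $h_j$ can raise exponents of variables appearing in both). This is exactly where I expect the main obstacle to lie, and where the bookkeeping supplied by the sets $M_j(\lambda)$ and by the main technical result Theorem~\ref{from_gelfand's_book} becomes essential. The clean way to organize the induction is to use the sets $M_1(\lambda) \supset M_2(\lambda) \supset \cdots \supset M_n(\lambda)$ from Definition~\ref{notation_of_set_of_monomials}: these filter monomials by how many of the leading variables $x_1, \ldots, x_{n-1}$ occur to a power $<2$. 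Reducing $x_j^2$ for the smallest bad index $j$ strictly increases the number of leading variables that are square-free-bounded, giving a well-founded termination measure (a lexicographic decrease in the exponent vector, or equivalently passage from $M_j(\lambda)$ to $M_{j+1}(\lambda)$). The bijections in Proposition~\ref{basic_property}(3)--(4) guarantee this process is consistent with the combinatorics of the $M_j(\lambda)$ and that no monomial is visited twice.

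Concretely, I would prove the spanning claim degree by degree and, within a fixed degree, by downward induction on the smallest index $j$ with exponent $\geq 2$: the resultant-type computation underlying Theorem~\ref{from_gelfand's_book} shows that in each degree $\lambda$ the coefficients of the $f_i$-multiples occurring in $I_\lambda$ form a matrix of ``binomial type'' whose determinant is, by Propositions~\ref{binomial_matrix} and \ref{binomial_matrix_2}, a product of binomials $a_{i_1}\cdots a_{i_r} \pm b_{i_1}\cdots b_{i_r}$ governed by the circuits of the associated digraph; the complete-intersection hypothesis forces this determinant to be nonzero, so $I_\lambda$ together with $E_\lambda$ spans all of $R_\lambda$. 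Once spanning is established in every degree, a dimension count (spanning set of size $2^n = \dim_K A$) upgrades it to a basis, completing the proof.
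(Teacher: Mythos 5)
Your overall frame is right---since $A$ is a complete intersection of quadrics, $\dim_K A = 2^n$, so it suffices to show the square-free monomials span $A$---and your final paragraph points at the correct mechanism, but the middle of your argument contains a claim that is simply false and would sink the proof as written. The asserted termination measure for the rewriting $x_j^2 \mapsto -h_j$ does not exist: replacing the square at the \emph{smallest} bad index $j$ can create a square at a \emph{smaller} index, so there is no ``passage from $M_j(\lambda)$ to $M_{j+1}(\lambda)$'' and no lexicographic decrease. Concretely, take $n=2$ and the normal-form binomials $f_1 = x_1^2 + b_1x_1x_2$, $f_2 = x_2^2 + b_2x_1x_2$. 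Then $x_1x_2^2 \equiv -b_2\,x_1^2x_2 \pmod{I}$, so the smallest bad index drops from $2$ to $1$ and the exponent vector $(1,2)$ jumps to $(2,1)$, which is lexicographically \emph{larger}; worse, $x_1^2x_2 \equiv -b_1\,x_1x_2^2 \equiv b_1b_2\,x_1^2x_2$, so the naive rewriting cycles forever. No local, monomial-by-monomial reduction can terminate here; what the cycle actually yields is the linear relation $(1-b_1b_2)\,x_1^2x_2 \equiv 0 \pmod{I}$, i.e.\ one must solve all the degree-$\lambda$ relations \emph{simultaneously}, not iterate substitutions.

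That global step is exactly what the paper does, and what your last paragraph gestures toward: in each degree $\lambda$ form the $N$ products $S = \bigcup_j M_j(\lambda-2)f_j$ with $N = \dim_K R_\lambda - \binom{n}{\lambda}$, let $C(\lambda)$ be their coefficient matrix on the non-square-free monomials, and invert it, so that each non-square-free monomial becomes congruent mod $I_\lambda$ to a combination of square-free ones. But your justification of the crucial point---``the complete-intersection hypothesis forces this determinant to be nonzero''---is an assertion of precisely the hard part, not a consequence of anything you have set up: a priori a specialization of the coefficients could kill $\det C(\lambda)$ while the ideal remains a complete intersection. Ruling this out is the real content of Theorem~\ref{from_gelfand's_book}: every irreducible factor of $\Delta_\lambda = \det C(\lambda)$, namely each $a_i$ and each circuit binomial, divides the resultant (items (0), (1), (4), which rest on Proposition~\ref{crucial_prop} and the GCD description of the resultant from \cite{GKZ}), so nonvanishing of the resultant at the specialized coefficients forces $\det C(\lambda) \neq 0$ for every $\lambda = 3, \ldots, n+1$. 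Two further inaccuracies: Propositions~\ref{binomial_matrix} and \ref{binomial_matrix_2} do not apply verbatim to $C(\lambda)$, since the same variable $a_i$ occurs in many rows of $C(\lambda)$ (the paper redevelops the circuit decomposition for $C(\lambda)$ separately in \S4); and your ``downward induction on the smallest index $j$'' plays no role once the matrix is inverted---it is a leftover of the abandoned rewriting scheme. With the false termination argument deleted and the final paragraph expanded along the lines above, your proposal becomes the paper's proof.
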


\begin{proof}
  
We fix the generators for $I$  as follows.
  \[f_1=a_1x_1^2+ b_1m_1,\]
  \[f_2=a_2x_2^2+ b_2m_2,\]
  \[   \vdots \]  
  \[f_n=a_n x_n ^2+ b_{n} m_n.\]
 ($m_j$ is a  square-free monomial.)  
  First we assume that $a_1, \ldots, a_n, b_1, \ldots, b_n$ are indeterminates. This means we work over
  $K=\pi(a_1, \ldots, a_n, b_1, \ldots, b_n )$, where $\pi$ is a prime field. 
  
Since the growth of the dimension of  $I_{\la}$ is the same as the monomial ideal $(x_1^2, x_2^2, \ldots, x_n^2)$,
we have 
\[\mu(\gm ^{\la -2} I) = \dim _K I_{\la}= \dim _K R_{\la} - {n \choose \la}.\]
($\mu$  denotes the number of generators of an ideal.)
Put $N:=\mu(\gm ^{\la - 2} I)$. 
We want to  specify  $N$ polynomials in $\gm ^{\la -2}I$ suitable for our purpose. 
For a minimal set of generators for $\gm ^{\la -2} I$ we can choose 
the set of polynomials
\[S:=M_1(\la -2)f_1 \cup  M_2(\la -2)f_2 \cup  \cdots \cup  M_{n-1}(\la -2)f_{n-1} \cup  M_n(\la -2)f_n .\]
Note that these unions are in fact disjoint unions and furthermore these elements are linearly independent.
To see this set $b_1=b_2= \cdots = b_n=0$. 
Then one sees easily that $S$ contains all the monomials in $(x_1^2, \ldots, x_n^2) \cap R_{\lambda}$. These are
$\dim _K R_{\lambda} - {n \choose \la}$ in number.
On the other hand, the number of elements  
$|S|$ can be as large as  this number only if the union is the disjoint union.
If we drop the condition $b_1 = \cdots = b_n=0$, 
the linear independence should be easier to prove.    
We rewrite the set $S$ as
$S=\{g_1, g_2, \ldots, g_N\}$. 
Index the monomials in $R_{\lambda}$ in such a way that the last ${n  \choose \lambda}$ are square-free monomials.
Recall that $\dim _K (I_{\la}) + {n \choose \lambda} = \dim R _{\lambda}$. 
Let $C'=(c_{ij})$ be the matrix consisting of the coefficients of the polynomials in $S$. 
So $C'$ satisfies the following equality:

\[ \begin{pmatrix} g_1 \\ g_2 \\ \vdots \\ g_N \end{pmatrix}
=C'\begin{pmatrix} w_1 \\ w_2 \\ \vdots \\ w_N \\ w_{N+1} \\ \vdots \\ w_{N'}  \end{pmatrix}, \]
where $w_1,  w_2, \ldots , w_{N'}$ are all the monomials in $R_{\la}$ and $N'=N+{n \choose \la}$.  
Let $C$ be the submatrix of $C'$  consisting of rows $1, 2, \ldots, N$  and columns $1, 2, \ldots, N$.  
By Theorem~\ref{from_gelfand's_book}, which we prove in the next section,  we have $\det C \neq 0$.
Define the polynomials $g_1',  g_2', \ldots, g_N'$ as follows:
\[\begin{pmatrix}g_1'\\ g_2'\\ \vdots \\ g_N ' \end{pmatrix}=C^{-1}
  \begin{pmatrix}g_1 \\ g_2 \\  \vdots \\ g_N  \end{pmatrix}=C^{-1}
  C'\begin{pmatrix} w_1\\ w_2 \\ \vdots \\ w_{N'}   \end{pmatrix}.\]

  This matrix notation shows that
\[g_k'-w_k= \fbox{ \mbox{ linear combination of square-free monomials}    } \]
for all $k=1,2, \ldots, N$. 

Since the elements  $g_1', \ldots, g_N'$ are a $K$-basis for  
$I_{\la}$, it follows that  any element of $R_{\lambda}$ can be expressed, mod $I_{\lambda}$, 
as a linear combination of square-free monomials of degree $\lambda$.  
Now the proof is complete for the generic case.  Next we assume $R$ is the polynomial ring
over an arbitrary field $K$. Suppose that $I$ is an ideal of $R$ obtained by substituting elements of
$K$ for the variables $a_i, b_i$.  The ideal $I$ is a complete intersection
if and only if the resultant is non-zero. 

Note that we have established a rewriting rule 
which assigns any monomial $m \in R_{\la}$ mod $I$ to 
a linear combination of square-free monomials in 
$R_{\la}$, for every $\la = 2, \ldots, n+1$. 

For each $\la \geq 2$, we have used the matrices $C'$ 
and $C$.  So we index them as $C'(\la)$ and  $C(\la)$, $\la = 2,3,  \ldots, n+1$. 
Define the matrix $C'(2)$ as the coefficient matrix for $f_1, \ldots, f_n$ and 
$C(2)$ is the first $n \times n$ submatrix of $C'(2)$,  which is automatically the
diagonal matrix with diagonal entries $(a_1, a_2, \ldots, a_n)$.  
By Theorem~\ref{from_gelfand's_book}, if the resultant is non-zero, then it implies all 
$C(3), C(4), \ldots, C(n+1)$ are invertible. Hence the proof is complete. 
\end{proof}

\begin{remark} 
\begin{enumerate}
\item It seems conceivable that any quadratic complete intersection defined by quadrics put in
  a normal form can have the set of square-free monomials as a vector space basis. 
  Theorem~\ref{main_theorem} should be regarded as a case where this can be verified. 
\item If each of the quadrics $f_i$ is a product of linear forms, the elements $f_i$ are in 
  a normal form if we adopt the variables  $x_1, \ldots, x_n$ as
  linear factors of $f_1, \ldots, f_n$ respectively.
  Abedelfatah~\cite{abed_abedelfatah} 
  proved that the Artinian algebra defined by such forms can have the square-free monomials as
  a vector space basis.
\end{enumerate}  
\end{remark}

\begin{remark} \label{additional_remark}
  In Notation~\ref{notation_of_set_of_monomials} we introduced the sets of monomials
  $M_1(\lambda),M_2(\lambda),\ldots, M_n(\lambda)$ for all $\lambda \geq 1$ and  used them
  to define the set $S$  in the proof of Theorem~\ref{main_theorem}.  Suppose that
  \[\sigma=\begin{pmatrix} 1&2&\cdots &n \\ 1'&2'&\cdots & n'   \end{pmatrix}\]
  is a permutation of the indices. If we use the order
  \[1' < 2' < \cdots < n', \]
  for the definition of $M_i(\la)$, instead of the natural order
  \[1 < 2 < \cdots < n, \]
  we have the different sets of monomials
  \[M_1'(\lambda),M_2'(\lambda) , \ldots ,M_n'(\lambda).\]
  The flag of subspaces 
  \[M_1'(\la) \supset M_2'(\la) \supset \cdots \supset M_n'(\la)\]
  is different from
  \[M_1(\la) \supset M_2(\la) \supset \cdots \supset M_n(\la).\]
  In this case we should adopt the set $S$ as
  \[S'=M_1'(\la - 2)f_{1'} \cup M_2'(\la - 2)f_{2'} \cup \cdots \cup M_n'(\lambda-2)f_{n'}.\]
  The consideration of the set $S'$ is important in the definition of the resultant of
  $f_1, \ldots, f_n$ for the binomial complete intersection. 
  See the proof of Theorem~\ref{from_gelfand's_book}(4).
  $M_k'(\lambda)$ should not be confused with $M_{k'}(\lambda)$. 
  It is important that $M_k'(\lambda-2)f_{k'}$ contains the polynomial $x_{k'}^{\lambda -2}f_{k'}$ for all $k$. 
  \end{remark}  

\section{Some remarks on the coefficient matrices of generic complete intersections}

We work with the {\em generic} complete intersection generated by
$f_i=a_ix_i^2 + b_i m_i$, where $m_i$ is a square free monomial of degree 2. 
Recall that we have defined the matrices
\[C'(\lambda) \mbox{ and } C(\lambda)\]
for\[\lambda =2,3, \ldots, n+1.\] 
To define them we used  the subsets $M_i(\lambda) \subset R_{\lambda}$ of monomials
for $i=1,2, \ldots, n$ for all $\lambda = 2,3, \ldots, n+1 $.

Actually it is possible to define these sets for all $\lambda > n+1$, although we do not need them.
From now on $C(\lambda)$ are defined for all $\lambda \geq 2.$ 
\begin{lemma}\label{key_lemma}   \label{lemma_on_C_and_C'}  \normalfont 
  Let $A=\{a_1, \ldots, a_n\}$, $B=\{b_1, \ldots, b_n\}$.  The matrices $C(\lambda)$ and $C'(\lambda)$
  have the following property.  
  \begin{enumerate}  
  \item[(1)] $C(\lambda)$ is a square matrix of size  $\dim _K R_{\lambda} -  {n \choose \lambda}$.
  \item[(2)] Each row of $C'(\lambda)$ contains exactly one element in the set $A$ and  one element in $B$.   
  \item[(3)] Each row of $C(\lambda)$ contains  exactly one element in the set $A$ and at most one element in $B$.    
  \item[(4)] A column of $C(\lambda)$ contains exactly one element in the set $A$. (It  may contain none of
    or many of the elements in $B$.) 
  \item[(5)] For any $i$ and $\lambda \geq 2$, $a_i$ divides $\det C(\lambda)$. 
    
  \end{enumerate}
\end{lemma}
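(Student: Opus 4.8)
The plan is to read off properties (1)--(4) directly from the way the rows of $C'(\la)$ were produced, and then to reduce the divisibility statement (5) to the single vanishing $\det C(\la)|_{a_i=0}=0$.

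First I would fix the indexing. Each row of $C'(\la)$ is one of the products $m f_i$ with $m\in M_i(\la-2)$, and since $f_i=a_ix_i^2+b_im_i$ is a binomial, such a row equals $a_i\,(mx_i^2)+b_i\,(mm_i)$; hence it has exactly one entry from $A$ (namely $a_i$, in the column of $u:=mx_i^2$) and one from $B$ (namely $b_i$, in the column of $v:=mm_i$), which is (2). The monomial $u=mx_i^2$ is never square-free, so its column survives the passage to $C(\la)$, whereas $v=mm_i$ may or may not be square-free; this gives (3). By Proposition~\ref{basic_property}(3) the assignment $m\otimes e_i\mapsto mx_i^2$ is a bijection from $\bigsqcup_i M_i(\la-2)\otimes e_i$ onto the non--square-free monomials of degree $\la$, i.e.\ onto the columns of $C(\la)$; thus the rows are equinumerous with the columns (this is (1), together with the Hilbert-function count $N=\dim_K R_\la-\binom{n}{\la}$), and each column is the leading monomial $u$ of exactly one row, so it carries exactly one $A$-entry, which is (4).

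For (5) I would first use (4) to permute the columns so that the $A$-entries sit on the main diagonal; then $C(\la)=D+B$ with $D=\mathrm{diag}(a_{i(1)},\dots,a_{i(N)})$ and $B$ the matrix of the $b$-entries, each row of $B$ having at most one nonzero entry. In the Leibniz expansion a nonzero term comes from a permutation $\si$ with $\si(k)\in\{k,c(k)\}$ for every $k$, where $c(k)$ is the column of the $b$-entry of row $k$; its $a_i$-content is $a_i^{\#\{k\,:\,i(k)=i,\ \si(k)=k\}}$. Hence $a_i\mid\det C(\la)$ as soon as no such $\si$ can move simultaneously all the rows of $R_i:=\{k:i(k)=i\}$. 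Equivalently, writing $\mathcal C_i$ for the set of $|M_i(\la-2)|$ columns whose $A$-entry is $a_i$, it suffices to show that after setting $a_i=0$ the columns $\mathcal C_i$ become linearly dependent, i.e.\ $\det C(\la)|_{a_i=0}=0$.

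To produce this vanishing I would split into two cases according to $\la$. If some $m\in M_i(\la-2)$ makes $v=mm_i$ square-free --- which happens whenever one can choose $m$ square-free and disjoint from $m_i$, e.g.\ for all $\la\le n$ --- then setting $a_i=0$ turns the row $mf_i$ into the zero row of $C(\la)$ and the determinant vanishes at once. In the remaining top-degree range, where every $mm_i$ is non--square-free, no row drops out and I would instead argue by a rank count: the $|\mathcal C_i|$ columns of $\mathcal C_i$ lose their only $A$-entry, so their support lies in the rows whose $B$-entry falls in $\mathcal C_i$, namely the rows $k'$ with $v_{k'}\in\mathcal C_i$; showing that there are strictly fewer such rows than $|\mathcal C_i|$ forces the dependence, since then no permutation can cover $\mathcal C_i$ by $b$-entries alone. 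The main obstacle is precisely this last count: because several rows share the same variables $a_i,b_i$ and the square-free monomials $m_{i'}$ are arbitrary, Proposition~\ref{binomial_matrix_2} does not apply verbatim, and one must exploit the combinatorics of Definition~\ref{notation_of_set_of_monomials} --- in particular the nesting $M_1\supset\cdots\supset M_n$ and the relation $M_j(\la)x_n\subset M_j(\la+1)$ of Remark~\ref{important_remark} --- to control which monomials $v_{k'}=m'm_{i'}$ can land in $\mathcal C_i$ and thereby to rule out a covering of $\mathcal C_i$ by such $b$-entries.
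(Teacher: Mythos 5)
Your treatment of (1)--(4) is correct and is essentially the paper's own argument: each row of $C'(\la)$ is the coefficient vector of $mf_i$ with $m\in M_i(\la-2)$, its $A$-entry $a_i$ sits in the non-square-free column $mx_i^2$, and Proposition~\ref{basic_property}(3) gives the bijection between rows and non-square-free columns, which yields (1) and (4).

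The genuine gap is in (5). Your Case 1 (some $m\in M_i(\la-2)$ makes $mm_i$ square-free, so the row of $mf_i$ in $C(\la)$ becomes zero at $a_i=0$) is correct and covers $2\le\la\le n$. But Case 2 is exactly $\la=n+1$, i.e.\ the determinant $\Delta_{n+1}$ that the paper actually needs for the resultant, and there your plan does not go through: the criterion you propose --- that the rows whose $B$-entry lands in $\mathcal C_i$ are \emph{strictly fewer} than $|\mathcal C_i|$ --- is false in general. Take $n=2$, $\la=3$, so $f_1=a_1x_1^2+b_1x_1x_2$ and $f_2=a_2x_2^2+b_2x_1x_2$. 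Here $\mathcal C_1$ consists of the two columns $x_1^3$ and $x_1^2x_2$, and the rows whose $B$-entry falls in $\mathcal C_1$ are $x_1f_1$ and $x_1f_2$ --- exactly $|\mathcal C_1|=2$ of them, not fewer. The vanishing at $a_1=0$ holds for a different reason: the column $x_1^3$ contains no $B$-entry at all. This is precisely the paper's proof of (5), which needs no case split and works uniformly for all $\la\ge 2$: the column of $C(\la)$ indexed by $x_i^{\la}$ contains $a_i$ (coming from the row of $x_i^{\la-2}f_i$; note $x_i^{\la-2}\in M_i(\la-2)$) and \emph{no other nonzero entry}, because an entry $b_j$ sits in a column $wm_j$ and a square-free quadratic $m_j$ can never divide the pure power $x_i^{\la}$, while $a_j$ with $j\neq i$ sits only in columns divisible by $x_j^2$. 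Cofactor expansion along this singleton column gives $\det C(\la)=\pm a_i\cdot(\mbox{minor})$, which is (5) at once; you should replace both of your cases by this single observation.
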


\begin{proof}
    (1) was proved earlier. 
  Recall that a row of the matrix $C'(\lambda)$ is defined as the coefficients of
  $w_kf_i$ for some $i$ with some monomial  $w_k$ of degree $\lambda -2$.
  This proves (2). 
   Recall that the last   ${n \choose \lambda}$  columns of $C'(\lambda)$ are indexed by square-free monomials.  On the
  other hand $a_i$ can appear only in the columns indexed by monomials which are divisible by  $x_i^2$ for some $i$.   
  This proves (3).  Suppose that $a_i$ appears in a column indexed 
  by a monomial $w$ with $w=x_1^{\la _1}x_2^{\la _2} \cdots x_n^{\la _n}$.
  It implies that
  \[\la _1 < 2, \la _2 < 2, \cdots, \la _{i-1} < 2  \leq  \la _i. \]
  Hence $a_k$ cannot appear in this column if $k\neq i$. This proves (4).    
  The column of $C(\lambda)$ indexed by $x_i^{\lambda}$, regarded as a monomial, contains $a_i$ and all  other
  entries are $0$. This proves (5). 
\end{proof}

We  define a circuit in $C(\lambda)$ in the same way as  $P$ considered in Proposition~\ref{binomial_matrix_2}.
Namely, a {\bf circuit} in $C(\lambda)$ is a sequence of elements in $A \sqcup B$
  \[a^{(1)} \to b^{(1)} \to a^{(2)} \to b^{(2)} \to \cdots  \to a^{(r)} \to b^{(r)} \to a^{(1)} \]
  such that $a^{(k)}$ and $b^{(k)}$ are in the same row and
  $b^{(k)}$ and $a^{(k+1)}$ and are in the same column of $C(\lambda)$.
  (The matrix $C(\la)$ differs from $P$ in that the same element occurs in different rows. Thus the repetition may exit in a circuit.)
  It is easy to see that if there is a  circuit in $C(\lambda)$, it gives us a binomial 
\[a_{i_1}^{\al _{i_1}}a_{i_2}^{\al _{i_2}} \cdots a_{i_r}^{\al _{i_r}}  \pm b_{i_1}^{\al _{i_1}}b_{i_2}^{\al _{i_2}} \cdots b _{i_r}^{\al _{i_r}}.\]
  as a factor in the determinant of $C(\la)$. 
We will say that {\bf two circuits are the same} if they give the same determinant.  
Note that a submatrix of $C(\lambda)$ whose determinant is a binomial is another name for a
circuit.  In this sense $C(\lambda)$ and $C(\lambda')$, $\la \neq \la '$, can have the same circuit. 

\begin{proposition} \label{crucial_prop}  \normalfont  
Suppose that a circuit exists in $C(\lambda)$. Then $C(\lambda+1)$ contains the same circuit. 
\end{proposition}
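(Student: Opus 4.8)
The plan is to describe a circuit entirely in terms of the monomials that index the rows and columns of $C(\lambda)$, and then to transport it to $C(\lambda+1)$ by multiplying every monomial in sight by the last variable $x_n$. First I would translate the combinatorial data. Each row of $C(\lambda)$ is the coefficient vector of a polynomial $w f_i$ with $w \in M_i(\lambda-2)$; by Lemma~\ref{lemma_on_C_and_C'}(3) its unique $A$-entry is $a_i$, sitting in the column indexed by the monomial $w x_i^2$, and its (at most one) $B$-entry is $b_i$, sitting in the column indexed by $w m_i$. Consequently, a circuit
\[a^{(1)} \to b^{(1)} \to a^{(2)} \to b^{(2)} \to \cdots \to a^{(r)} \to b^{(r)} \to a^{(1)}\]
amounts to a cyclic list of rows $w^{(1)} f_{i_1}, \dots, w^{(r)} f_{i_r}$, with $a^{(k)} = a_{i_k}$ and $b^{(k)} = b_{i_k}$, subject to the monomial identities
\[w^{(k)} m_{i_k} = w^{(k+1)} x_{i_{k+1}}^2, \qquad k = 1, 2, \dots, r,\]
read cyclically (so $w^{(r+1)} = w^{(1)}$ and $i_{r+1} = i_1$); these identities are exactly the statement that the column holding $b^{(k)}$ is the column holding $a^{(k+1)}$.

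Next I would produce the candidate circuit in $C(\lambda+1)$ by replacing each $w^{(k)}$ with $x_n w^{(k)}$ while keeping the indices $i_k$ fixed. Since $w^{(k)} \in M_{i_k}(\lambda-2)$, Remark~\ref{important_remark}(3) gives $x_n w^{(k)} \in M_{i_k}(\lambda-1)$, so $(x_n w^{(k)}) f_{i_k}$ is genuinely one of the rows of $C(\lambda+1)$; in it, $a_{i_k}$ occupies the column $x_n w^{(k)} x_{i_k}^2$ and $b_{i_k}$ the column $x_n w^{(k)} m_{i_k}$. Multiplying the identities above through by $x_n$ yields $x_n w^{(k)} m_{i_k} = x_n w^{(k+1)} x_{i_{k+1}}^2$, which says that in $C(\lambda+1)$ the column of $b_{i_k}$ again coincides with the column of $a_{i_{k+1}}$. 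Hence the same cyclic word in the $a$'s and $b$'s traces a circuit of $C(\lambda+1)$, and because the multiset of indices $i_1, \dots, i_r$ is untouched, the associated binomial factor is literally unchanged; this is precisely the assertion that $C(\lambda+1)$ contains the same circuit.

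The point that I expect to require the most care --- and which is really the only potential friction --- is checking that every row and column I invoke actually belongs to $C(\lambda+1)$ rather than to the discarded square-free part. For the $B$-columns this is automatic: each $w^{(k)} m_{i_k}$ equals $w^{(k+1)} x_{i_{k+1}}^2$ and is therefore non-square-free, so $x_n w^{(k)} m_{i_k}$ stays non-square-free and indexes a genuine column of $C(\lambda+1)$. For the rows, the content of Remark~\ref{important_remark}(3) is exactly what is needed: the membership $w^{(k)} \in M_{i_k}(\lambda-2)$ forces the exponents of $x_1, \dots, x_{i_k-1}$ to be less than $2$, and appending $x_n$ disturbs none of these exponents, so $x_n w^{(k)}$ still satisfies the defining inequalities of $M_{i_k}(\lambda-1)$. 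Here it is essential that we multiply by the \emph{last} variable, since the constraints cutting out $M_{i_k}$ involve only the earliest indices $1, \dots, i_k-1$; multiplying by an early variable could violate them. Once these two observations are in place the construction closes up, with no further case analysis needed (the degenerate degree $\lambda = 2$ being vacuous, as $C(2)$ carries no $B$-entries and hence no circuits).
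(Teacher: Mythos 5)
Your proof is correct and follows essentially the same route as the paper: both transport the circuit to $C(\lambda+1)$ by multiplying all row and column indices by the last variable $x_n$, with Remark~\ref{important_remark}(3) guaranteeing that the new row indices still lie in the appropriate sets $M_{i_k}(\lambda-1)$. Your write-up merely makes explicit some checks the paper leaves implicit (the column identities $w^{(k)}m_{i_k}=w^{(k+1)}x_{i_{k+1}}^2$ and the fact that all columns involved stay non-square-free), which is a faithful elaboration rather than a different argument.
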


\begin{proof}
  Recall that the columns of $C(\lambda)$ are indexed by certain monomials.  
By the definition of the elements $\{g_1, g_2, \ldots, g_N\}$ to be the set $S$, we may index the 
rows of $C(\la)$ by the elements of $M_j(\lambda - 2)\otimes e_j$. 
  If we multiply the elements (as indices) by $x_n$, they remain as indices for
  $C(\lambda +1)$, since the multiplication by $x_n$ does not change the exponents  
  of monomials except the exponent of $x_n$ itself. (See Remark~\ref{important_remark}(3).) 
    Suppose that
    $\{U_1, U_2, \ldots, U_r\}$ are indices of rows  
    and   $\{W_1, W_2, \ldots, W_r\}$ are
    indices of columns 
    which gives us a circuit of $C(\la)$.  
  Then the submatrix of $C(\lambda +1)$ consisting of rows and columns indexed by
  $\{U_1x_n, U_2x_n, \ldots, U_rx_n\}$ and  $\{W_1x_n, W_2x_n, \ldots, W_rx_n\}$ 
  gives us the same circuit in $C(\lambda +1)$. 
  This proves the assertion.  
\end{proof}

\newcommand{\res}{{\rm Res}}
We denote by $\res(f_1, \ldots, f_n)$ the resultant of $f_1, \ldots, f_n$.
The resultant is a polynomial in the variables of the coefficients $a_1, \ldots, a_n, b_1, \ldots, b_n$, but 
even after the coefficients are substituted for elements in $K$, we call it the resultant. 
The ideal $I$ obtained by substitution is a complete intersection if and only if the resultant
does not vanish. For details see Gelfand et.\ al \cite{GKZ}. 

Define $\Delta _{\lambda}: =\det C(\lambda)$.  
Now we can prove the main theorem of this paper.  

\begin{theorem}  \label{from_gelfand's_book}  \normalfont  
\begin{enumerate}
\item[(0)]  
  $a_1a_2\cdots a_n \mid \Delta _{\lambda}$ for any $\lambda \geq 2$.  
\item[(1)]  
  $a_1a_2\cdots a_n \mid \res(f_1, \ldots, f_n)$. 
\item[(2)]
  $   \res(f_1, \ldots, f_n) \mid   \Delta_{n+1}$.   
\item[(3)]  
  $\sqrt{\Delta_2} \mid \sqrt{\Delta_3} \mid \cdots \mid \sqrt{\Delta_{n+1}}$.   ($\sqrt{\Delta}$ means
  that the  exponents are replaced by $1$ in the factorization of $\Delta$.)
\item[(4)]
  A circuit that appears in some $\Delta(\lambda)$ is a factor of $\res(f_1, \ldots, f_n)$. 
\item[(5)]  $\sqrt{\res(f_1, f_2, \cdots, f_n)}= \sqrt{\Delta_{n+1}}$.  
\end{enumerate}
\end{theorem}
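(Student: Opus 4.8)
The plan is to establish the six assertions in the order in which they are stated, using the earlier ones to bootstrap the later ones. Assertion (0) is immediate: Lemma~\ref{lemma_on_C_and_C'}(5) gives $a_i\mid\Delta_\lambda$ for every $i$, and since $a_1,\ldots,a_n$ are distinct indeterminates, hence pairwise coprime irreducible elements of $R$, their product divides $\Delta_\lambda$. For (1) I would use a coordinate-point zero. Specialize $a_i=0$, so that $f_i$ becomes $b_im_i$ with $m_i$ square-free of degree $2$. At the point with $x_i=1$ and all other coordinates zero, every generator evaluates to $a_\ell\delta_{\ell i}$, because a square-free degree-two monomial is never $x_i^2$ and so vanishes there. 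Thus once $a_i=0$ the entire system vanishes at this point, so $\res(f_1,\ldots,f_n)$ vanishes on $\{a_i=0\}$ and $a_i\mid\res$; coprimality again yields $a_1\cdots a_n\mid\res$.

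For (2) I would invoke the theory of \cite{GKZ}. At the critical degree $\lambda=n+1$ there are no square-free monomials, so by Proposition~\ref{basic_property}(4) the matrix $C(n+1)=C'(n+1)$ is the square Macaulay-type matrix whose rows are the coefficient vectors of the multiples $w\,f_i$ with $w\in M_i(n-1)$, and whose columns are indexed by all monomials of $R_{n+1}$. In the fully generic setting (all coefficients indeterminate) the resultant divides the determinant of such a matrix, i.e.\ there is a polynomial identity of the form $\det(\text{Macaulay})=\mathrm{Res}\cdot E$; specializing the coefficients to our binomial values is a ring homomorphism and preserves this factorization, giving $\res(f_1,\ldots,f_n)\mid\Delta_{n+1}$. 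Assertion (3) then runs on Proposition~\ref{crucial_prop}: multiplying row- and column-indices by $x_n$ embeds every circuit of $C(\lambda)$ into $C(\lambda+1)$. As in Proposition~\ref{binomial_matrix_2}(3), $\Delta_\lambda$ factors as a monomial in the $a_i$ times a product of circuit-binomials, so $\sqrt{\Delta_\lambda}$ is the product of the distinct $a_i$ together with the distinct circuit-binomials. By (0) every $a_i$ occurs in each $\Delta_\lambda$, and by Proposition~\ref{crucial_prop} the set of circuits is non-decreasing in $\lambda$; hence $\sqrt{\Delta_\lambda}\mid\sqrt{\Delta_{\lambda+1}}$, which chains to the asserted divisibilities.

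The substantive step is (4), and this is where I expect the real work. Given a circuit $a^{(1)}\to b^{(1)}\to\cdots\to a^{(r)}\to b^{(r)}\to a^{(1)}$ in some $C(\lambda)$, producing the binomial factor $\beta$ (irreducible by Proposition~\ref{binomial_matrix}), I must show $\beta\mid\res$. The column-sharing relations of the circuit force a monomial identity $\prod_k x_{i_k}^2=\prod_k m_{i_k}$ among the generators involved. On the locus where the corresponding $f_{i_k}$ vanish one has $x_{i_k}^2=-(b_{i_k}/a_{i_k})\,m_{i_k}$, and multiplying these around the cycle collapses, via that monomial identity, to the single compatibility condition $\beta=0$. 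The method, signalled in Remark~\ref{additional_remark}, is to reorder the variables so that the circuit is realized by the diagonal multiples $x_{k'}^{\lambda-2}f_{k'}$ appearing in the permuted set $S'$; one then sets the variables outside the circuit to zero and solves the cyclic system consistently, obtaining a common projective zero of \emph{all} of $f_1,\ldots,f_n$ exactly when $\beta=0$. Hence $\res$ vanishes on $V(\beta)$, and irreducibility of $\beta$ gives $\beta\mid\res$. The main obstacle is the honest construction of this common zero: checking that the generators outside the circuit also vanish at the constructed point, handling the exponents introduced by repeated vertices, and verifying the nondegeneracy $\prod_k m_{i_k}\neq0$.

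Finally (5) is a clean synthesis of the previous items together with the determinant factorization. By (2) we have $\res\mid\Delta_{n+1}$, hence $\sqrt{\res}\mid\sqrt{\Delta_{n+1}}$. Conversely, by the factorization used in (3) every irreducible factor of $\Delta_{n+1}$ is either some $a_i$ or a circuit-binomial; the former divides $\res$ by (1) and the latter by (4). Therefore $\sqrt{\Delta_{n+1}}\mid\sqrt{\res}$, and the two radicals coincide, which is the assertion $\sqrt{\res(f_1,\ldots,f_n)}=\sqrt{\Delta_{n+1}}$.
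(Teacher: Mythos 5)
Your items (0), (1), (2), (3) and (5) track the paper's proof essentially step for step: (0) is Lemma~\ref{lemma_on_C_and_C'}(5); (1) is the paper's observation that setting $a_i=0$ destroys the complete intersection property (your explicit zero $x_i=1$, $x_j=0$ for $j\neq i$ just makes this concrete); (2) is the citation to \cite{GKZ}; (3) combines $\Delta_2=a_1\cdots a_n$, part (0) and Proposition~\ref{crucial_prop}; and (5) is the same synthesis of (1), (2) and (4). The one genuine divergence is (4), which you yourself identify as the substantive step, and it is exactly there that your proposal has a gap.

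The paper never constructs a common zero for (4). It invokes the fact from \cite[p.429]{GKZ} that the resultant is the GCD of the top determinants $\Delta_{n+1}^{\sigma}$ formed with respect to the cyclic reorderings $\sigma$ of the indices (this is what Remark~\ref{additional_remark} is preparing), and then notes that a circuit of any $C(\lambda)$ propagates via Proposition~\ref{crucial_prop} up to $C(n+1)$, which equals $C'(n+1)$ up to row and column permutations and is therefore the same for every $\sigma$; hence the circuit binomial divides every $\Delta_{n+1}^{\sigma}$ and so divides their GCD. Your alternative --- exhibit a common projective zero of \emph{all} of $f_1,\ldots,f_n$ on $V(\beta)$ --- could in principle work, but the recipe you give for the step you flag as ``the main obstacle'' is false as stated. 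Take $n=4$ with $f_3=a_3x_3^2+b_3x_3x_4$, $f_4=a_4x_4^2+b_4x_3x_4$, and $f_1=a_1x_1^2+b_1x_3x_4$: the rows $x_4f_3$, $x_3f_4$ and columns $x_3^2x_4$, $x_3x_4^2$ of $C(3)$ give the circuit binomial $a_3a_4-b_3b_4$, and any zero arising from this circuit has $x_3x_4\neq 0$, so setting the outside variable $x_1$ to zero leaves $f_1=b_1x_3x_4\neq 0$. One must instead \emph{solve} the outside generators for the outside variables; since their monomials $m_i$ can couple outside variables to one another (and to the circuit variables) in further cycles, the solvability of that residual system is a statement of the same nature as (4) itself, and your sketch does not address it. A second overreach: circuits in $C(\lambda)$ can revisit a generator (the paper notes this explicitly), so circuit binomials carry exponents, as in the \S5 example $a_1^7a_2^8a_3^{10}a_4^5a_5^9+p_1^7p_2^8p_3^{10}p_4^5p_5^9$; Proposition~\ref{binomial_matrix} proves irreducibility only for exponent-one binomials, so the irreducibility (or at least radicality) of $\beta$ that your Nullstellensatz step ``$\mathrm{Res}$ vanishes on $V(\beta)$, hence $\beta\mid\mathrm{Res}$'' requires is not established in the generality you need. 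By contrast, the paper's GCD argument needs no irreducibility of $\beta$ at all.
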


\begin{proof}
  \begin{enumerate}
  \item[(0)]
    This is proved in Lemma~\ref{lemma_on_C_and_C'}(5). 
  \item[(1)]
  It is easy to see that if we set $a_i$ =0 for some $i$, the ideal  $(f_1, \ldots, f_n)$ cannot be a complete intersection. Therefore
  $a_1\cdots a_n$ divides $\res(f_1, \ldots, f_n)$.
\item[(2)]  See \cite[p.429]{GKZ}.
\item[(3)]  It is easy to see that $\Delta _2 =a_1a_2\cdots a_n$. For $\lambda > 2$, it is also easy to see that
  $a_i$ divides $\Delta _{\lambda}$. Suppose that a binomial is a factor of $\Delta _{\lambda}$. Then
  it is a factor of $\Delta _{\lambda+1}$ by Proposition~\ref{crucial_prop}. 
\item[(4)] In \S\ref{def_of_Ms} we constructed the sets  $\{M_j(\lambda) \}$. They depend on the
  order of the indices $1 < 2< \ldots < n$.  Suppose that
  \[\sigma:=\begin{pmatrix} 1 & 2 & \cdots & n \\ 1' & 2' &  \cdots & n' \end{pmatrix}   \] is a permutation of indices.
  Then with the order $1' < 2 ' < \cdots < n'$
  we can obtain another  sequence of determinants: 
  \[\Delta _2 ^{\sigma} , \Delta _3 ^{\sigma},  \ldots, \Delta _{n+1} ^{\sigma}.\]
  It is known that the GCD of $\{\Delta _{n+1}^{\sigma}\}$,  where $\sigma$ runs over
  all cycles of length $n$  
  \[\sigma=\begin{pmatrix} 1 & 2 & \cdots & n-1 & n \\ k & k+1 &  \cdots& k-2 & k-1 \end{pmatrix},    \] 
  gives us the
  resultant $\res(f_1, \ldots, f_n)$.  (See \cite[429]{GKZ}.) Thus to prove the claim
  it suffices to show that if a circuit  appears in
  one of $\Delta _{\lambda}$, with $\lambda \leq n$, then that circuit also appears in $\Delta_{n+1} '$ which is 
  obtained based on a permutation  $\sigma$, whatever the permutation is. 
  This is easy to see since $C'(n+1)=C(n+1)$ up to permutation of rows and columns, every circuit in $C'(\lambda)$
  for $\lambda \leq n$ is contained in $C(n+1)$. 
\item[(5)] Put $r=\res(f_1, \ldots, f_n)$.  By (2) we have  $\sqrt{r} \mid \sqrt{\Delta{n+1}}$. A factor of
  $\Delta_{n+1}$ is either a factor of $a_1\cdots a_n$ or a circuit in $C(n+1)$. We have seen that
  each $a_i$ divides $r$. On the other hand if a circuit divides $C(n+1)$, it divides  $r$ by (4). This completes the proof.    
  \end{enumerate}
\end{proof}

\section{Some examples}
In this section we set $K=\pi(a_1, \ldots, a_5, p_1,\ldots, p_5 )$, the rational function field
over the prime field $\pi$, and $R=K[x_1, \ldots, x_5]$.
By $\sigma=\begin{pmatrix} 1&2&3&4&5 \\ 2&3&4&5&1\end{pmatrix}$, we denote the cyclic permutation of the indices.
If $f \in R$, we denote by $f^{\sigma}$ the polynomial obtained from $f$ by substituting
the indices $i$ by $\sigma(i)$.   
In Example~\ref{ex_1}, the polynomials $f_i$ are determined by $f_1$ by the rule
$f_{i}=f_{i-1}^{\sigma}$ for $i=2,3,4,5$.   

\begin{example}  \label{ex_1} \normalfont
\begin{enumerate} \mbox{}
\item
  If $f_1=a_1x_1^2+ p_1x_1x_2$, we have   
$\res(f_1, \ldots, f_5) = (a_1a_2a_3a_4a_5)^{14}(a_1a_2a_3a_4a_5 + p_1p_2p_3p_4p_5)$ 
\item
  If $f_1=a_1x_1^2+ p_1x_2x_3$,  
$\res(f_1, \ldots, f_5) = (a_1a_2a_3a_4a_5)^{5}(a_1a_2a_3a_4a_5 + p_1p_2p_3p_4p_5)^{11}$
  \item
  If $f_1=a_1x_1^2+ p_1x_2x_5$,  
$\res(f_1, \ldots, f_5) = (a_1a_2a_3a_4a_5)^{11}(a_1a_2a_3a_4a_5 + p_1p_2p_3p_4p_5)^5$
\end{enumerate}
\end{example}

There are $10$ square free monomials in $R_5$. In all cases
the resultant is one of the above three types. It is known that the resultant is a polynomial of
degree $80$. If all $f_j$ factor into two linear forms, the resultant can be computed by
\cite[Chapter~13, Propsotion~1.3]{GKZ}.
This was also computed by Abedelfatah~\cite{abed_abedelfatah} without referring to the resultant. 

In the following table 
$\al, \be$ are the integers such that
\[\res(f_1, \ldots, f_5)=(a_1a_2a_3a_4a_5)^{\al}(a_1a_2a_3a_4a_5+ p_1p_2p_3p_4p_5)^{\be}. \]

\vspace{1ex}

\[\begin{array}{|c|c|c|c|}  \hline
 \mbox{monomial in $f_1$}     & f_1         &  \al   & \be     \\ \hline 
x_1x_2 & a_1x_1^2+ p_1 x_1x_2  &  15  & 1   \\ \hline
x_1x_3 & a_1x_1^2+ p_1 x_1x_3  &  15  & 1   \\ \hline
x_1x_4 & a_1x_1^2+ p_1 x_1x_4  &  15  & 1   \\ \hline
x_1x_5 & a_1x_1^2+ p_1 x_1x_5  &  15  & 1   \\ \hline
x_2x_3 & a_1x_1^2+ p_1 x_2x_3  &  5  & 11   \\ \hline
x_2x_4 & a_1x_1^2+ p_1 x_2x_4  &  5  & 11   \\ \hline
x_2x_5 & a_1x_1^2+ p_1 x_2x_5  &  11 & 5   \\ \hline
x_3x_4 & a_1x_1^2+ p_1 x_3x_4  &  11 & 5   \\ \hline
x_3x_5 & a_1x_1^2+ p_1 x_3x_5  &  5  & 11   \\ \hline
x_4x_5 & a_1x_1^2+ p_1 x_4x_5  &  5  & 11   \\ \hline
\end{array}
\]

\begin{example}  \normalfont
  In this example we chose the square-free monomials rather randomly.
  
  Put
  \[f_1=a_1x_1 ^2 + p_1x_2x_3,\]
  \[f_2=a_2x_2 ^2 + p_2x_3x_5,\]
  \[f_3=a_3x_3 ^2 + p_3x_4x_5,\]
  \[f_4=a_4x_4 ^2 + p_4x_1x_3,\]
  \[f_5=a_5x_5 ^2 + p_5x_1x_2.\]
  Then
  \[\res(f_1,\ldots, f_5)=a_1^9a_2^8a_3^6a_4^{11}a_5^7(a_1^7a_2^8a_3^{10}a_4^5a_5^9+p_1^7p_2^8p_3^{10}p_4^5p_5^9).\]

\end{example}

\section{Relevance to the 2nd Hessian}

\begin{example} \label{ex_3} \normalfont
  Let $K$ and $R$ be the same as in the previous section. We use the notation $v=x_1, w=x_2, \ldots, z=x_5 $ interchangeably.  
  We consider the polynomial \[G=120 vwxyz+s_1+s_2+s_3+s_4+s_5,\]
  where
\begin{align*}
  {}&s_1=- (p_1^3p_3p_4v^5 + p_2^3p_4p_5w^5  + p_1p_3^3p_5x^5 +  p_1p_2p_4^3y^5  +  p_2p_3p_5^3z^5),   \\
  {}&s_2= -20(p_1v^3wz + p_2vw^3x  +  p_3wx^3y + p_4xy^3z  + p_5vyz^3),   \\
  {}&s_3= 20(p_1^2p_3p_4v^3xy + p_2^2p_4p_5w^3yz  +  p_1p_3^2p_5vx^3z + p_1p_2p_4^2vwy^3  + p_2p_3p_5^2wxz^3),\\
  {}&s_4= 30(p_1p_3v^2wx^2 + p_2p_4w^2xy^2 + p_3p_5x^2yz^2 + p_1p_4v^2y^2z + p_2p_5vw^2z^2),  \\
  {}&s_5 =-30(p_1p_2p_4v^2w^2y + p_2p_3p_5w^2x^2z + p_1p_3p_4vx^2y^2 + p_2p_4p_5wy^2z^2 + p_1p_3p_5v^2xz^2).
\end{align*}

  We consider $G$ as a polynomial in the polynomial ring $R=K[v,w,x,y,z]$. $G$ was obtained as the
  Macaulay dual generator of the complete intersection $I=(f_1,f_2,f_3,f_4,f_5)$, where 
  \[f_1=v^2 + p_1wz,\]  
  \[f_2=w^2 + p_2xv,\]
  \[f_3=x^2 + p_3yw,\]
  \[f_4=y^2 + p_4zx,\]
  \[f_5=z^2 + p_5vw.\]  

  As we said in the introduction, the resultant of 
  these elements  is \[(p_1p_2p_3p_4p_5+1)^5.\]  (The polynomial $G$ was computed by the computer
  algebra system Mathematica~\cite{mathematica}.) 

  It is not difficult to verify that $\Ann _R(F) \supset (f_1, \ldots, f_5)$.  
  If $p_1p_2p_3p_4p_5 + 1 \neq 0$, then since $\Ann _R G$ is a
  Gorenstein ideal containing $f_1, \ldots, f_5$, 
  it follows that they coinside: $\Ann _R(G)=(f_1, \ldots , f_5)$ and 
  $A:=K[v,w,x,y,z]/\Ann _R(G)$ has the Hilbert function $(1\ 5 \ 10 \ 10  \ 5 \ 1)$.

  If $p_1p_2p_3p_4p_5+1 =  0$, then we can calculate that the algebra  
  $A=K[v,w,x,y,z]/\Ann _R(G)$ has the Hilbert function $(1\ 5 \ 5 \ 5  \ 5 \ 1)$.

  Since we know that the square free monomials are linearly independent,
  the second Hessian matrix of $G$ is, in this case, computed as the $10 \times 10$ matrix 
  \[H^2(G)=\left( \frac{\pa^4 (G)}{\pa x_i \pa x_j \pa _k \pa _l } \right) _{(1 \leq i < j \leq 5),(1 \leq k  < l \leq 5)} .\]
  For details of higher Hessians, see \cite{maeno_watanabe}.    
\newcommand{\hess}{{\rm hess}}
  Let $\hess ^2(G)$ be the determinant of $H^2(G)$.  
  It is a polynomial in $v, \ldots, z, p_1, \ldots, p_5$.  We may regard $\hess ^2$ as a polynomial in
  $v, w, x, y, z$ with coefficients in $\pi[p_1, p_2, p_3, p_4,p_5]$.  Let $P$ be the ideal in the
  polynomial ring $\pi[p_1, p_2, p_3, p_4, p_5]$ generated by the coefficients of $\hess ^2$,
  where $\pi$ is a prime field.  
  It has many complicated generators but surprisingly enough, 
  it turns out that the ideal
  $P$ is a principal ideal generated by  of $(1+p_1p_2p_3p_4p_5)^5$. The computation was done also by Mathematica~\cite{mathematica}.  
\end{example}

\begin{example}   \label{ex_4}  \normalfont  
  Let $F$ be the polynomial in the first paragraph of Introduction.
$F$ was in fact obtained as the Macaulay dual generator fo the complete intersection $I=(f_1, \ldots, f_5)$, where 
  \[f_1=v^2 + p_1wx,\]
  \[f_2=w^2 + p_2xy,\]
  \[f_3=x^2 + p_3yz,\]
  \[f_4=y^2 + p_4vz,\]
  \[f_5=z^2 + p_5vw.\]
  As in the previous example, let $H^2(F)$  be the second  Hessian of $F$. Then the coefficient ideal
  in $K[p_1, \ldots, p_5]$ turns out to be the unit ideal.  
  Hence
  the algebra $\pi[p_1, \ldots, p_5][v,w,x,y,z]/I$ gives a flat family of Artinian Gorenstein algebras over
  $K=\pi[p_1, \ldots, p_5]$.  
  The fiber is a complete intersection if and only if  $p_1p_2p_3p_4p_5+1 \neq 0$, and otherwise 
  it is a Gorenstein algebra defined by a $7$-generated ideal. (This is a computational result.) 
  \end{example}

\section*{Acknowledgement}
The third author would like to thank K.\ Yanagawa
for a helpful conversation for 
quadratic binomial complete intersections.



\begin{thebibliography}{9}

\bibitem{abed_abedelfatah}  
A.\ Abedelfatah, 
\emph{On the Eisenbud--Green--Harris conjecture}, 
Proc.\ of AMS \textbf{143}, (2014), no.\ 1, 105--115.   
\bibitem{GKZ} 
I.\ M.\ Gelfand, M.\ M.\ Kaplanov, A.\ V.\ Zelevinsky, 
\emph{Discriminants, Resultants, and Multidimensional determinants},  
Birkh\"auser, 1995.  
\bibitem{maeno_watanabe}  
T.\ Maeno and J.\  Watanabe, 
\emph{Lefschetz elements of Artinian Gorenstein algebras and Hessians of homogeneous polynomials}, 
Illinois J.\ Math.\ \textbf{53}, (2009), no.2,  591--603. 
\bibitem{cris_mcdaniel}  
C.\ McDaniel, 
\emph{Some remarks on Watanabe's bold  conjecture}, 
arXiv:1603.09401v1[math.AC](2016). 
\bibitem{HMMNWW}  
T.\ Harima, T.\ Maeno, H.\ Morita, Y.\ Numata, A.\ Wachi,  and   J.\ Watanabe, 
The Lefschetz Properties, 
Springer Lecture Notes~2080, Springer-Verlag, 2013.
\bibitem{hww_1} 
T.\ Harima, A.\ Wachi and J.\  Watanabe, 
\emph{The quadratic complete intersections associated with the symmetric group},  
Illinois J.\ Math.\ \textbf{59} (2015), no.\ 1, 99-113.  
\bibitem{mathematica} 
S.\ Wolfram, 
\emph{Mathematica 10.4},
Wolfram Research, Inc., Mathematica, Version 10.4, Champaign, IL (2016). 
\end{thebibliography}
\end{document}